\numberwithin{equation}{section}
\numberwithin{figure}{section}
  \theoremstyle{definition}
  \newtheorem{defn}{\protect\definitionname}
\theoremstyle{plain}
\newtheorem{thm}{\protect\theoremname}
  \theoremstyle{plain}
  \newtheorem{lem}{\protect\lemmaname}
  \providecommand{\definitionname}{Definition}
  \providecommand{\lemmaname}{Lemma}
\providecommand{\theoremname}{Theorem}
\begin{document}

\title{On the norm of inverses of confluent Vandermonde matrices}

\author{Dmitry Batenkov}

\address{Department of Mathematics\\
Weizmann Institute of Science\\
Rehovot 76100\\
Israel}

\email{dima.batenkov@weizmann.ac.il}

\urladdr{http://www.wisdom.weizmann.ac.il/~dmitryb}

\keywords{confluent Vandermonde matrices}

\subjclass[2000]{Primary: 15A09, Secondary: 65F35}

\thanks{This research has been supported by the Adams Fellowship Program
of the Israel Academy of Sciences and Humanities.}

\date{\today}
\begin{abstract}
In this note we present a simple upper bound for the row-wise norm
of the inverses of general confluent Vandermonde matrices.
\end{abstract}
\maketitle

\section{Introduction and main result}

Let $\{x_{1},\dots,x_{n}\}$ be pairwise distinct complex numbers
and $\{\ell_{1},\dots,\ell_{n}\}$ a vector of natural numbers such
that $\ell_{1}+\ell_{2}+\dots+\ell_{n}=N$.
\begin{defn}
The $N\times N$ \emph{confluent Vandermonde matrix} is
\[
V=\begin{bmatrix}v_{1,0} & v_{2,0} & \dots & v_{n,0}\\
v_{1,1} & v_{2,1} & \dots & v_{n,1}\\
\vdots\\
v_{1,N-1} & v_{2,N-1} & \dots & v_{n,N-1}
\end{bmatrix}
\]
where $v_{j,k}=\begin{bmatrix}x_{j}^{k}, & kx_{j}^{k-1}, & \dots & k(k-1)\times\dots\times(k-\ell_{j}+1)x_{j}^{k-\ell_{j}+1}\end{bmatrix}$.
\end{defn}
While the usual Vandermonde matrices, corresponding to the configuration
$\ell_{1}=\dots=\ell_{n}=1$, are ubiquitous, the general confluent
case is somewhat less known. The confluent matrices classically appeared
in theory of interpolation and quadrature \cite{higham1996accuracy,kalman1984gvm},
as well as in more recent studies of higher-order numerical methods
in signal processing \cite{batFullFourier,batenkov2011accuracy,batyomAlgFourier}.

It is often desirable to estimate the row-wise norm of $V^{-1}$ (which
can be used to further evaluate the condition number), see e.g. \cite{bazan2000crv,beckermann2000condition,gautschi1962iva,gautschi1963inverses,gautschi1974nei,Gautschi1978445}.
Gautschi obtained very precise bounds in \cite{gautschi1962iva,gautschi1963inverses},
but only for the case $\ell_{i}\leq2;\; i=1,\dots n$. In this note
we generalize these results for the arbitrary confluent configuration.
Our main result is as follows.
\begin{thm}
\label{thm:main-thm}Assume that the points $\left\{ x_{j}\right\} $
satisfy $\left|x_{j}\right|\leq1$ and also that they are $\delta$-separated,
i.e. $\left|x_{i}-x_{j}\right|\geq\delta>0$ for $i\neq j$. Denote
by $u_{j,k}$ the row with index $\ell_{1}+\dots+\ell_{j-1}+k+1$
of $V^{-1}$ (for $k=0,1,\dots,\ell_{j}-1$). Then the $\ell_{1}$-norm
of $u_{j,k}$ satisfies
\begin{equation}
\|u_{j,k}\|_{1}\leqslant\left(\frac{2}{\delta}\right)^{N}\frac{2}{k!}\left(\frac{1}{2}+\frac{N}{\delta}\right)^{\ell_{j}-1-k}.\label{eq:main-bound}
\end{equation}

\end{thm}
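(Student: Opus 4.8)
The plan is to translate the statement into a question about Hermite interpolation. First I would identify the rows of $V^{-1}$: reading the identity $V^{-1}V=I$ off one column at a time shows that if $h_{j,k}(x):=\sum_{i=0}^{N-1}(u_{j,k})_i\,x^i$ is the polynomial whose coefficient vector is the row $u_{j,k}$, then $h_{j,k}^{(s)}(x_m)=\delta_{jm}\delta_{ks}$ for all $m=1,\dots,n$ and $s=0,\dots,\ell_m-1$; that is, $h_{j,k}$ is the Hermite fundamental polynomial of degree $\le N-1$ attached to the node $x_j$ and order $k$. Consequently $\|u_{j,k}\|_{1}$ is exactly the sum of the moduli of the monomial coefficients of $h_{j,k}$, and this coefficient $\ell_1$-norm is submultiplicative under polynomial multiplication and satisfies $\|x-a\|_1=1+|a|\le 2$ whenever $|a|\le 1$ — these two facts carry out all the norm arithmetic.

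Next I would write $h_{j,k}$ explicitly. Since $h_{j,k}$ vanishes to order $\ell_m$ at every $x_m$ with $m\ne j$, it is divisible by $w_j(x):=\prod_{m\ne j}(x-x_m)^{\ell_m}$, so $h_{j,k}(x)=w_j(x)g(x)$ with $\deg g\le\ell_j-1$; matching the Hermite data at $x_j$ forces $g(x)=\frac1{k!}(x-x_j)^kT_L(x)$, where $L:=\ell_j-1-k$ and $T_L$ is the degree-$L$ Taylor polynomial of $1/w_j$ about $x_j$. Thus
\[
h_{j,k}(x)=\frac{(x-x_j)^k}{k!}\,w_j(x)\,T_L(x),\qquad T_L(x)=\sum_{i=0}^{L}a_i(x-x_j)^i,\quad a_i=\tfrac1{i!}\bigl(1/w_j\bigr)^{(i)}(x_j).
\]
From submultiplicativity, $\|h_{j,k}\|_1\le\frac{2^k}{k!}\,\|w_j\|_1\sum_{i=0}^{L}|a_i|\,2^i$, with $\|w_j\|_1\le 2^{N-\ell_j}$, while $|a_0|=|1/w_j(x_j)|=\prod_{m\ne j}|x_j-x_m|^{-\ell_m}\le\delta^{-(N-\ell_j)}$, which already produces the factor $(2/\delta)^{N-\ell_j}$ after combination with $\|w_j\|_1$.

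The core estimate is a bound on the higher Taylor coefficients $a_i$ in terms of $\delta$, $N$ and $a_0$, which I would extract from the logarithmic-derivative identity $\bigl(1/w_j\bigr)'/\bigl(1/w_j\bigr)=-w_j'/w_j=-\sum_{m\ne j}\ell_m/(x-x_m)$. The Taylor coefficients of the right-hand side about $x_j$ are $g_p:=-\sum_{m\ne j}\ell_m(x_m-x_j)^{-(p+1)}$, of modulus at most $N\,\delta^{-(p+1)}$; feeding this into the recursion $(i+1)a_{i+1}=\sum_{p=0}^{i}g_p\,a_{i-p}$ gives a geometric-type majorant for the $|a_i|$ whose per-step growth is controlled by $\tfrac12+\tfrac N\delta$. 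Summing the resulting finite geometric series over $i=0,\dots,L$ and folding in $\frac{2^k}{k!}$, $\|w_j\|_1$ and $|a_0|$ is what yields \eqref{eq:main-bound}. (An equivalent route replaces this by the recursion $k!\,h_{j,k}(x)=(x-x_j)^k-(x-x_j)^{\ell_j}G_{j,k}(x)$, where $G_{j,k}$ is the Hermite interpolant of $(x-x_j)^{-(\ell_j-k)}$ on the reduced node set $\{x_m\}_{m\ne j}$, and then runs induction on the number of distinct nodes.)

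The step I expect to be the real obstacle is exactly this last one: making the constants come out to precisely $(2/\delta)^N\,\frac2{k!}\,(\tfrac12+\tfrac N\delta)^{\ell_j-1-k}$ rather than a bound of the same shape with worse bases. A naive combination — $\|w_j\|_1\le 2^{N-\ell_j}$ together with a crude Cauchy estimate for the $a_i$ on a circle of radius $\delta/2$ — introduces spurious factors exponential in $N$, so one must use the \emph{exact} value of $a_0$, squeeze the recursion so that the per-coefficient growth is no larger than $\tfrac12+\tfrac N\delta$, and handle the low-index terms of the geometric/binomial sums carefully; the regime where $\delta$ is close to its maximal value $2$ is the tightest and needs the most attention.
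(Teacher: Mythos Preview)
Your plan is essentially the paper's proof. The paper likewise writes the row $u_{j,k}$ as the coefficient vector of the Hermite fundamental polynomial, uses exactly your factorization $h_{j,k}(x)=\frac{1}{k!}(x-x_j)^k\,w_j(x)\,T_L(x)$ with $T_L$ the degree-$L$ Taylor polynomial of $1/w_j$ at $x_j$ (citing Schappelle), bounds the coefficient $\ell_1$-norm of $(x-x_j)^{k+t}w_j(x)$ by $2^{N-\ell_j+k+t}$ via the same submultiplicativity argument (Gautschi's lemma), and controls the Taylor coefficients $a_t=\frac{1}{t!}(1/w_j)^{(t)}(x_j)$ through the logarithmic-derivative identity you wrote down.

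The only place where your sketch is imprecise is the endgame you yourself flag. The paper does not get a geometric majorant with ratio $\tfrac12+\tfrac{N}{\delta}$; instead the log-derivative recursion, run via Leibniz and induction on $t$, yields the closed form
\[
\bigl|(1/w_j)^{(t)}(x_j)\bigr|\le N(N+1)\cdots(N+t-1)\,\delta^{-N-t},
\]
i.e.\ $|a_t|\le\binom{N+t-1}{t}\delta^{-N-t}$. The final sum $\sum_{t=0}^{L}|a_t|\,2^{N-\ell_j+k+t}$ is then not geometric but binomial: one rewrites $\frac{1}{t!}=\binom{L}{t}\frac{(L-t)!}{L!}$, uses $\frac{N(N+1)\cdots(N+t-1)}{L(L-1)\cdots(L-t+1)}\le N^t$, and recognizes $\sum_{t=0}^{L}\binom{L}{t}(2N/\delta)^t=(1+2N/\delta)^{L}$, which after pulling out $2^{-(\ell_j-k)}$ gives exactly $(2/\delta)^N\frac{2}{k!}(\tfrac12+\tfrac{N}{\delta})^{\ell_j-1-k}$. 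So your outline is right; what remains is to replace the vague ``geometric-type majorant'' by this explicit rising-factorial bound and the binomial identity.
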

The proof of this theorem (see \prettyref{sec:proof-of-thm}) combines
original Gautschi's technique \cite{gautschi1963inverses} and the
well-known explicit expressions for the entries of $V^{-1}$ \cite{schappelle1972icv},
plus a technical lemma (\prettyref{sec:Technical-lemma}, \prettyref{lem:technical-lemma}).

In contrast with \cite{gautschi1963inverses}, the bound \eqref{eq:main-bound}
depends only on the separation distance and the combinatorial structure
of the problem. It shows that the norm grows polynomially with $\frac{2}{\delta}$
and exponentially with $N$.

\section{\label{sec:Technical-lemma}Technical lemma}
\begin{defn}
For $j=1,\dots,n$ let
\begin{equation}
h_{j}(x)=\prod_{i\neq j}(x-x_{i})^{-\ell_{i}}.\label{eq:h-def}
\end{equation}
\end{defn}
\begin{lem}
\label{lem:technical-lemma}The derivatives of $h_{j}$ at $x_{j}$
satisfy
\[
\left|h_{j}^{(t)}\left(x_{j}\right)\right|\leqslant N(N+1)\cdots(N+t-1)\delta^{-N-t}.
\]
\end{lem}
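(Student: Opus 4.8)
The plan is to estimate the Taylor coefficients of $h_{j}$ about the point $x_{j}$ by the Cauchy method of majorants. Writing $y=x-x_{j}$ and $c_{i}=x_{i}-x_{j}$ (so that $|c_{i}|\geq\delta$ for $i\neq j$), we have $h_{j}(x_{j}+y)=\prod_{i\neq j}(y-c_{i})^{-\ell_{i}}$. First I would expand each factor as a power series in $y$ near the origin,
\[
(y-c_{i})^{-\ell_{i}}=\frac{(-1)^{\ell_{i}}}{c_{i}^{\ell_{i}}}\sum_{k\geq0}\binom{k+\ell_{i}-1}{\ell_{i}-1}\Bigl(\frac{y}{c_{i}}\Bigr)^{k},
\]
and observe that, since $|c_{i}|\geq\delta$, the modulus of the coefficient of $y^{k}$ is at most $\binom{k+\ell_{i}-1}{\ell_{i}-1}\delta^{-(k+\ell_{i})}$, which is exactly the $k$-th Taylor coefficient of the nonnegative series $(\delta-y)^{-\ell_{i}}$. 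Hence each factor is majorized, coefficientwise in absolute value, by $(\delta-y)^{-\ell_{i}}$.

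Next, because a product of power series each majorized by a series with nonnegative coefficients is itself majorized by the product of the majorants, it follows that $h_{j}(x_{j}+y)$ is majorized by $\prod_{i\neq j}(\delta-y)^{-\ell_{i}}=(\delta-y)^{-(N-\ell_{j})}$. Therefore
\[
\bigl|h_{j}^{(t)}(x_{j})\bigr|=t!\,\bigl|[y^{t}]\,h_{j}(x_{j}+y)\bigr|\;\leq\;t!\,[y^{t}]\,(\delta-y)^{-(N-\ell_{j})}=t!\binom{t+N-\ell_{j}-1}{N-\ell_{j}-1}\delta^{-(N-\ell_{j})-t}.
\]
Finally I would use the identity $t!\binom{t+m-1}{m-1}=m(m+1)\cdots(m+t-1)$ with $m=N-\ell_{j}$, and then bound $m\leq N$ together with $\delta\leq1$ (the regime in which the estimate is applied) to arrive at $\bigl|h_{j}^{(t)}(x_{j})\bigr|\leq N(N+1)\cdots(N+t-1)\delta^{-N-t}$.

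The only point needing care is the justification of the majorization of a (finite) product of infinitely supported power series, together with the observation that all series involved converge for $|y|<\delta$ since the nearest singularity of $h_{j}(x_{j}+y)$ lies at distance $\geq\delta$ from the origin; this is the standard Cauchy majorant lemma and introduces no new ideas. An alternative route, closer to Gautschi's original technique, is to differentiate $\log h_{j}$: from $(\log h_{j})'(x)=-\sum_{i\neq j}\ell_{i}/(x-x_{i})$ one gets $|(\log h_{j})^{(s)}(x_{j})|\leq(s-1)!\,(N-\ell_{j})\,\delta^{-s}$ at once, and then the Leibniz recursion $h_{j}^{(t)}=\sum_{k=0}^{t-1}\binom{t-1}{k}(\log h_{j})^{(k+1)}h_{j}^{(t-1-k)}$ can be propagated by induction; the combinatorial identity required at the inductive step is precisely the Leibniz expansion of the $t$-th derivative of the model function $(\delta-y)^{-N}$, for which every estimate above is attained with equality. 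On either route I expect the main obstacle to be purely organizational — keeping the binomial bookkeeping straight — rather than conceptual.
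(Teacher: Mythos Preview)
Your primary argument via Cauchy majorants is correct and takes a genuinely different route from the paper. The paper argues by induction on $t$: it writes $h_{j}'=h_{j}\cdot(\log h_{j})'$ with $(\log h_{j})'(x)=-\sum_{i\neq j}\ell_{i}/(x-x_{i})$, expands $h_{j}^{(t)}$ by the Leibniz rule, and then verifies that the resulting recursion
\[
P_{t}(N)=N\sum_{k=0}^{t-1}\frac{(t-1)!}{k!}P_{k}(N),\qquad P_{0}(N)=1,
\]
is solved by the rising factorial $P_{t}(N)=N(N+1)\cdots(N+t-1)$. This is precisely the ``alternative route'' you sketch at the end of your proposal.

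Your majorant approach buys you a cleaner argument: it avoids the induction and the need to identify the closed form of a recursion, it yields the sharper intermediate bound with $m=N-\ell_{j}$ in place of $N$, and it explains \emph{why} the rising factorial appears (it is $t!$ times the $t$-th Taylor coefficient of the explicit comparison function $(\delta-y)^{-m}$). The paper's route, by contrast, requires one to guess or otherwise recognise the solution of the recursion. Both proofs need $\delta\leq 1$ to pass from exponent $N-\ell_{j}$ to the stated exponent $N$; you flag this explicitly, whereas the paper uses it silently already in the base case $|h_{j}(x_{j})|\leq\delta^{-N}$.
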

\begin{proof}
The proof has been kindly provided to us by did \cite{248559}. Assume
by induction that there exists a universal polynomial $P_{t}\left(N\right)$
of degree $t$ such that
\[
|h_{j}^{(t)}(x_{j})|\leqslant P_{t}(N)\,\delta^{-N-t}.
\]
For $t=0$ we have immediately $\left|h_{j}\left(x_{j}\right)\right|\leqslant\delta^{-N}$.
Now
\begin{equation}
h'_{j}(x)=h_{j}(x)\sum_{i\ne j}\frac{-\ell_{i}}{x-x_{i}}\label{eq:log-derivative}
\end{equation}
 Therefore we can apply the Leibnitz rule
\begin{eqnarray*}
h_{j}^{\left(t\right)}\left(x\right) & = & \left(\frac{h_{j}'}{h_{j}}h_{j}\right)^{\left(t-1\right)}=\sum_{k=0}^{t-1}{t-1 \choose k}h_{j}^{(k)}(x)\sum_{i\ne j}\frac{(-1)^{t-k-1}(t-k-1)!\ell_{i}}{(x-x_{i})^{t-k}},
\end{eqnarray*}
hence
\[
|h_{j}^{(t)}(x_{j})|\leqslant\sum_{k=0}^{t-1}{t-1 \choose k}|h_{j}^{(k)}(x_{j})|\sum_{i\ne j}\frac{(t-k-1)!\ell_{i}}{|x_{j}-x_{i}|^{t-k}}.
\]
This implies, together with the induciton hypothesis, that
\[
|h_{j}^{(t)}(x_{j})|\leqslant\sum_{k=0}^{t-1}{t-1 \choose k}\frac{P_{k}(N)}{\delta^{N+k}}\cdot\frac{(t-k-1)!N}{\delta^{t-k}}.
\]
So one can choose $P_{0}\left(N\right)=1$ and, for every $t\geq0,$
\[
P_{t}(N)=N\sum_{k=0}^{t-1}\frac{\left(t-1\right)!}{k!}P_{k}(N).
\]
This yields $P_{t}(N)=N(N+1)\cdots(N+t-1)$, which completes the proof.
\end{proof}

\section{\label{sec:proof-of-thm}Proof of \prettyref{thm:main-thm}}

By using a generalization of the Hermite interpolation formula (\cite{spitzbart1960generalization}),
it is shown in \cite{schappelle1972icv} that the components of the
row $u_{j,k}$ are just the coefficients of the polynomial
\[
\frac{1}{k!}\sum_{t=0}^{\ell_{j}-1-k}\frac{1}{t!}h_{j}^{(t)}(x_{j})(x-x_{j})^{k+t}\prod_{i\neq j}(x-x_{i})^{\ell_{i}}
\]
where $h_{j}\left(x\right)$ is given by \eqref{eq:h-def}.

By \cite[Lemma]{gautschi1962iva}, the sum of absolute values of the
coefficients of the polynomials $(x-x_{j})^{k+t}\prod_{i\neq j}(x-x_{i})^{\ell_{i}}$
is at most
\[
(1+|x_{j}|)^{k+t}\prod_{i\neq j}(1+|x_{i}|)^{\ell_{i}}\leqslant2^{N-(\ell_{j}-k-t)}.
\]

Therefore
\begin{eqnarray*}
\|u_{j,k}\|_{1} & \leqslant & \frac{1}{k!}\sum_{t=0}^{\ell_{j}-1-k}\frac{1}{t!}\frac{N(N+1)\cdots(N+t-1)}{{\delta^{N+t}}}2^{N-\ell_{j}+k+t}\\
 & = & \biggl(\frac{2}{\delta}\biggr)^{N}\frac{1}{{2^{\ell_{j}-k}k!}}\sum_{t=0}^{\ell_{j}-1-k}{\ell_{j}-1-k \choose t}\frac{N(N+1)\cdots(N+t-1)}{(\ell_{j}-k-t)\cdots(\ell_{j}-k-2)(\ell_{j}-k-1)}\biggl(\frac{2}{\delta}\biggr)^{t}\\
 & \leqslant & \biggl(\frac{2}{\delta}\biggr)^{N}\frac{1}{{2^{\ell_{j}-k}k!}}\biggl(1+\frac{2N}{\delta}\biggr)^{\ell_{j}-1-k}\\
 & = & \biggl(\frac{2}{\delta}\biggr)^{N}\frac{2}{k!}\biggl(\frac{1}{2}+\frac{N}{\delta}\biggr)^{\ell_{j}-1-k}
\end{eqnarray*}
which completes the proof.

\bibliographystyle{plain}
\bibliography{../../bibliography/all-bib}

\end{document}